\newtheorem{theorem}{Theorem}[section]
\newtheorem{lemma}[theorem]{Lemma}
\newtheorem{proposition}[theorem]{Proposition}
\theoremstyle{definition}
\theoremstyle{remark}
\numberwithin{equation}{section}
\newcommand\vare{\varepsilon}
\newcommand\s{\sigma}
\newcommand\RR{\mathbb R}
\newcommand\ZZ{\mathbb Z}
\newcommand\cD{\mathcal{D}}
\newcommand\cE{\mathcal{E}}
\newcommand\cH{\mathcal{H}}
\newcommand\cM{\mathcal{M}}
\newcommand\cS{\mathcal{S}}
\newcommand\cP{\mathcal{P}}
\newcommand\norm[1]{\left\|#1\right\|}
\newcommand\abs[1]{\left|#1\right|}
\newcommand\inn[1]{\left\langle #1 \right\rangle}
\newcommand\set[1]{\left\{{#1}\right\}}
\begin{document}

\title[Equidistribution in $SL_2(\RR)$-actions]{Equidistribution in measure-preserving actions \\ of semisimple groups : case of $SL_2(\RR)$}

\author{Amos Nevo}
\address{Department of Mathematics, Technion IIT}
\email{anevo@tx.technion.ac.il}
\thanks{The author was supported by ISF Grant}


\subjclass{Primary 37A30, 28D15; Secondary 22E43, 43A90 }


\dedicatory{}

\keywords{Ergodic theorems, pointwise convergence, $SL_2(\RR)$, spherical functions, unitary representation, maximal inequalities, spectral estimates}

\begin{abstract}
We prove  pointwise convergence for the semi-radial averages on $G=SL_2(\RR)$ given by 
$\int_t^{t+1} m_K\ast \delta_{a_{s}}ds$ (and similar variants),  acting on $K$-finite $L^p$-functions in a probability-measure-preserving action of the group, for $p > 1$.  
\end{abstract}

\maketitle

\section{Pointwise convergence for semi-radial averages on $SL_2(\RR)$}


Let $a_t = \begin{pmatrix} e^t & 0 \\ 0 & e^{-t} \end{pmatrix}$, and
let $K=SO(2,\RR) \subset SL(2,\RR)$. Let $(X,\mu)$ be a standard Borel  
measure space on which $SL(2,\RR)$ acts, preserving 
the probability measure $\mu$ 
on $X$. We let $\pi_X$ denote the unitary representation of $G$ in $L^2(X)$, given by $\pi_X(g)f(x)=f(g^{-1}x)$. Recall that $f\in L^2(X)$ is called a $K$-finite function if the linear span $V(f)$ of the set of vectors $\set{\pi_X(k)f\,;\, k\in K}$ is finite dimensional. 

Assume $\eta \in C_c(\RR)$ is a 
non-negative bump function of unit integral, and define the averaging operators 
$$\cM^\eta_t f(x)= \int_{-\infty}^\infty \eta( t -s) \left(
  \int_K f( a_s k x) \, dm_K(k) \right) \, ds $$
  where $m_K$ is normalized Haar probability measure on $K$.

\begin{theorem}\label{thm:sl(2)}
\label{sl2}
Assume $f \in
L^{1+\kappa}(X,\mu)$ for some $\kappa > 0$, and $f$ is a $K$-finite function. If $\mu$ is an ergodic measure, then
for
$\mu$-almost every  $x$ in $X$, 
$$
\lim_{t \to \infty} \cM^\eta_t f(x)= \int_X f(y) \, d\mu(y).
$$

\end{theorem}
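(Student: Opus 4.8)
The plan is to split the problem into a maximal inequality and an $L^{2}$ mean ergodic theorem for $\cM^\eta_t$ on $K$-finite functions, and then to combine them into a.e.\ convergence. First I would reduce: since $\eta$ and $m_K$ have unit mass, $\cM^\eta_t$ fixes constants, so after subtracting $\int_X f\,d\mu$ we may assume $\int_X f\,d\mu=0$; and since $K=SO(2,\RR)$ is abelian, $K$-finiteness of $f$ means $f=\sum_{n\in F}f_n$ is a \emph{finite} sum with $\pi_X(k)f_n=\chi_n(k)f_n$ for the unitary character $\chi_n$ of $K$, each $f_n\in L^{1+\kappa}$ with $\int_X f_n\,d\mu=0$. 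As $\cM^\eta_t$ is linear, it suffices to treat a single $f_n$.

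\emph{Maximal inequality.} The key observation is that although $f_n$ is not $K$-invariant, $|f_n|$ \emph{is} $K$-invariant, because $|\chi_n|\equiv 1$, so $\pi_X(k)|f_n|=|f_n|$. Since $\eta\ge 0$ we get the pointwise bound $|\cM^\eta_t f_n|\le\cM^\eta_t(|f_n|)$, and for the $K$-invariant function $|f_n|$ one has $\int_K|f_n|(a_skx)\,dm_K(k)=(\s_s|f_n|)(x)$ with $\s_s$ the bi-$K$-invariant sphere average of radius $|s|$; thus $\cM^\eta_t(|f_n|)=\int\eta(t-s)\,\s_s|f_n|\,ds$ is a smoothed spherical average. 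Using the exponential volume growth of $\HH^2=G/K$ (a sphere of radius $r$ has measure $\asymp e^r$, so a bounded-width annulus near radius $t$ has measure comparable with that of the ball $B_t$), I would then dominate it, for $t\ge t_0$, by the ball maximal function: $\cM^\eta_t(|f_n|)(x)\le C_\eta\sup_{r\ge 1}(\beta_r|f_n|)(x)$ with $\beta_r$ the ball average. Hence $\sup_{t\ge t_0}|\cM^\eta_t f_n|\le C_\eta\sup_{r\ge 1}\beta_r|f_n|$, and the Wiener-type maximal inequality for ball averages in probability-measure-preserving actions of $SL_2(\RR)$ (Nevo--Stein) gives $\norm{\sup_{t\ge t_0}|\cM^\eta_t f_n|}_p\le C_{p,\eta}\norm{f_n}_p$ for every $p>1$; summing over $n\in F$ gives the maximal inequality for $\cM^\eta_t f$.

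\emph{Mean convergence and conclusion.} Writing $\cM^\eta_t f_n=P_K\int\eta(t-s)\pi_X(a_{-s})f_n\,ds$ with $P_K=\int_K\pi_X(k)\,dm_K(k)$, I would expand $\norm{\cM^\eta_t f_n}_2^2$ into an integral over $k\in K$ and $s,s'$ near $t$ of matrix coefficients $\inn{\pi_X(a_{s'}ka_{-s})f_n,f_n}$; since $a_{s'}ka_{-s}\to\infty$ in $G$ as $t\to\infty$ for each fixed $k\ne e$, ergodicity (no invariant vectors in $L^2_0(X)$) and the Howe--Moore vanishing of matrix coefficients make the integrand tend to $0$ a.e., and dominated convergence gives $\norm{\cM^\eta_t f_n}_2\to 0$; this passes to $L^{1+\kappa}$ by density of $L^2$ inside $L^{1+\kappa}$ within each $K$-type and contractivity of $\cM^\eta_t$ on every $L^p$. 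To upgrade to a.e.\ convergence, for $f_n\in L^2$ of type $n$ with $\int_X f_n=0$ I would split $f_n=P_{>\delta}f_n+P_{\le\delta}f_n$ via the spectral projection, in the direct-integral decomposition of $\pi_X$ on the $K$-type, onto the part of the spectrum at distance $>\delta$ from the trivial representation ($P_{>\delta}$ commutes with $\pi_X(K)$, and $\norm{P_{\le\delta}f_n}_2\to 0$ as $\delta\to 0$ since there are no invariant vectors). On the $\delta$-gapped part the associated spherical (Jacobi) functions $\psi^{(n)}_\lambda(s)$ decay exponentially at a rate uniform in $\lambda$, so $\partial_t\cM^\eta_t$ acts there as a multiplier of size $O_\delta(e^{-\vare_\delta t})$, whence $\int_{t_0}^\infty\norm{\partial_t\cM^\eta_t(P_{>\delta}f_n)}_2\,dt<\infty$; this forces $\cM^\eta_t(P_{>\delta}f_n)(x)$ to converge as $t\to\infty$ for a.e.\ $x$, with limit $0$ by the mean convergence. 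The maximal inequality applied to $P_{\le\delta}f_n$ gives $\norm{\sup_{t\ge t_0}|\cM^\eta_t(P_{\le\delta}f_n)|}_2\le C\norm{P_{\le\delta}f_n}_2\to 0$, so along $\delta_m\to 0$ this supremum tends to $0$ a.e.; combining the two parts, $\limsup_{t\to\infty}|\cM^\eta_t f_n(x)|=0$ a.e. Finally, an arbitrary $f_n\in L^{1+\kappa}$ of type $n$ is approximated in $L^{1+\kappa}$ by $L^2$ functions of the same type and the maximal inequality transfers the a.e.\ limit; summing over $F$ completes the proof.

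\emph{Main obstacle.} The hard part is the maximal inequality, and the device that unlocks it is the elementary but essential fact that $K$-finiteness forces each $|f_n|$ to be $K$-invariant, so that $\cM^\eta_t$ is controlled by a smoothed spherical — hence, thanks to the exponential growth of $\HH^2$, ball — maximal operator. The remaining subtlety is that a general ergodic $SL_2(\RR)$-action has no spectral gap (complementary series can accumulate at the trivial representation), which is precisely why the pointwise step must run through the splitting $P_{>\delta}+P_{\le\delta}$ and the uniform exponential decay of matrix coefficients on spectrally gapped subspaces, rather than through a single square-function estimate.
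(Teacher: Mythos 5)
Your proposal is correct in outline and shares the paper's architecture: reduce to $K$-isotypic components, dominate the semi-radial averages by bi-$K$-invariant (radial) ones to get the maximal inequality from Nevo--Stein, prove a.e.\ convergence on a spectrally localized dense part using exponential decay of the $(0,n)$-matrix coefficients together with an integrated-derivative argument, and close up with the maximal inequality. The genuine differences are in execution. The paper bounds $\abs{\pi(\chi_n\ast\delta_{a_s}\ast\chi_m)f}\le\pi(m_K\ast\delta_{a_s}\ast m_K)\abs{f}$ and quotes the maximal inequality of \cite{NS} for the smoothed radial averages $\gamma_t$ directly, while you observe the equivalent fact that $\abs{f_n}$ is $K$-invariant and then pass to ball averages via exponential volume growth --- a valid alternative; your mean convergence via Howe--Moore replaces the paper's spectral norm-decay estimates; and, most substantially, you run the pointwise step on the smoothed operators $\cM^\eta_t$ with the split $f_n=P_{>\delta}f_n+P_{\le\delta}f_n$, whereas the paper proves a.e.\ convergence of the unsmoothed $\sigma_t(0,n)$ on a subspace $\cD(\cH_\vare)$ of smooth vectors whose spectral support also satisfies $C(\tau)<1/\vare$, a truncation in the principal-series parameter forced by its derivative estimate $\abs{\tfrac{d}{dt}\Phi^{\pi_\lambda^+}_{0,n}(a_t)}\le B(1+\abs{\lambda})(1+t)e^{-t}$.

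Two points in your version need to be made explicit for the argument to close. First, the multiplier bound for $\partial_t\cM^\eta_t$ on the gapped part cannot be justified by ``uniform in $\lambda$'' decay of \emph{derivatives} of the spherical functions --- that uniformity fails (the derivative oscillates with frequency $\lambda$, whence the paper's factor $(1+\abs{\lambda})$); it does hold because the $t$-derivative falls on the bump, $\partial_t\cM^\eta_t=\int_\RR\eta'(t-u)\,\pi_X(m_K\ast\delta_{a_{-u}})\,du$, so only the uniform bounds $\abs{\Phi^\tau_{n,0}(a_u)}\le\Xi(a_u)$ and $\abs{\Phi^\tau_{n,0}(a_u)}\le Be^{-\vare_\delta\abs{u}}$ on the $\delta$-gapped complementary part are needed; phrased this way you in fact bypass the paper's derivative estimates altogether. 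Second, passing from $\int_{t_0}^\infty\norm{\partial_t\cM^\eta_t(P_{>\delta}f_n)}_2\,dt<\infty$ to a.e.\ convergence requires the pointwise fundamental theorem of calculus: for a.e.\ $x$ the map $t\mapsto\cM^\eta_t(P_{>\delta}f_n)(x)$ must be absolutely continuous with pointwise derivative agreeing a.e.\ with the $L^2$-derivative; the paper's final subsection is devoted to exactly this verification (there for $\sigma_t(0,n)$ on smooth vectors). In your smoothed setting it is easier --- Fubini makes $u\mapsto\int_K g(a_ukx)\,dm_K(k)$ locally integrable for a.e.\ $x$, and convolving with $\eta$ in $t$ then gives a $C^1$ function of $t$ --- but it must be stated. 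With these two clarifications your proof is complete.
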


We remark that the same pointwise convergence result holds for any (not necessarily ergodic) invariant measure $\mu$, with the limit being the conditional expectation on the sub-$\sigma$-algebra of $G$-invariant sets. Furthermore, the same conclusion holds when $t\to -\infty$ as well. 

We note that a mean ergodic theorem for semi-radial averages $m_K\ast \delta_{a_t}$ is proved in greater generality in \cite{Ve1}. The pointwise convergence result stated in Theorem  \ref{thm:sl(2)} is utilized in \cite{EM1}, which provided the motivation for its formulation. Another application of semi-radial averages can be found in \cite{EMM}.

\section{Ergodic theorems for character-spherical averages on $SL_2(\RR)$}

\subsection{The maximal inequality for character-spherical operators}
Before proceeding to the proof of 
Theorem~\ref{thm:sl(2)} we begin with some preliminaries. First, let 
$\psi_n(k_{\theta})=\exp( in\theta)$ denote the characters of the 
circle group $SO(2,\RR)=K=\{k_\theta\mid 0\le \theta \le 2\pi\}$. Let $\chi_n$ denote 
the (complex) Radon measure on $K$ whose density w.r.t. normalized Haar measure on $K$ is 
$\overline{\psi}_n$. To any bounded Borel measure $\nu$ on $G$, there corresponds 
in any strongly continuous unitary representation $\pi$,
the operator $\pi(\nu)v=\int_G \pi(g)vd\nu(g)$. This correspondence is a bounded homomorphism of the convolution algebra, namely $\pi(\nu_1 \ast \nu_2)=\pi(\nu_1)\circ \pi(\nu_2)$, and $\norm{\pi(\nu)}\le \norm{\nu}$ (where $\norm{\nu}$ is the total variation norm).

In particular the operator
$\pi(\chi_n)$ is equal, in 
any strongly continuous unitary representation $(\pi, \cH^{\pi})$ of 
$K$, to the orthogonal projection onto the 
closed linear subspace $\cH_n^{\pi}=\{v\in \cH^{\pi}\mid \pi(k_\theta)v
=\psi_n(k_\theta)v\}$. In particular, $\pi(\chi_n)$ is a self-adjoint 
idempotent operator.

Now consider the (complex) measure $\sigma_t(n,m)=\chi_n\ast \delta_{a_t}
\ast \chi_m$, where $\delta_{a_t}$ denotes the delta measure at the element 
$a_t\in A$. The corresponding operator $\pi(\sigma_t(n,m))$
 has norm bounded by 
$1$, but is not self-adjoint, in general. If $\eta\in C_c(\RR)$, 
satisfies $\eta\ge 0$ and 
$\int_{-\infty}^{\infty}\eta(s)ds=1$, the measure
$\gamma^\eta_t(n,m)=\int_{-\infty}^\infty \eta (t-s)\sigma_s (n,m)ds$, 
 is a convex combination of the measures $\sigma_s(n,m)$, where 
$s$ ranges over $t-B_\eta$ with $B_\eta=\text{supp} \,\,\eta$.
In general, the measures of the form $\chi_n\ast \nu\ast \chi_m$ are precisely the complex measures $\omega$ on $G$ satisfying the equation $\chi_n\ast \omega\ast \chi_m =\omega$, and will be called character-spherical measures. 

Note that if $f\in L^2(X)$ is $K$-finite, then there is a finite set $F_f\subset \ZZ$ of characters of the circle group $K$, such that the finite-dimensional space $V(f)$ spanned by the $K$-translates of $f$ is a finite direct sum $\bigoplus_{n\in F_f} V_n(f)$, where $V_n(f)$ is finite-dimensional, and $\pi(k_\theta)$ acts on $V_n(f)$ as scalar multiplication by $\psi_n(k_\theta)=e^{in \theta}$. In particular, $\pi(\chi_n)f$ is the orthogonal projection of $f$ on $V_n(f)$  for every $n\in F_f$, and $\sum_{n\in F_f}\pi(\chi_n)f=f$. Conversely, every function $f\in L^2(X)$ satisfying that $\sum_{n\in F}\pi(\chi_n)f=f$ for some finite set $F$ of characters of $K$,
 is a $K$-finite function.

The pointwise convergence stated in Theorem \ref{thm:sl(2)} is  based on the following maximal inequality. 

\begin{theorem}
\label{theorem:maximal}
In any probability-measure-preserving action $(X,\mu)$
 of $SL_2(\RR)$, the operators 
$\gamma^\eta_t(n,m)$ (for any $n,m$) satisfy the strong type $(p,p)$ global maximal inequality in every 
$L^p$, $1 < p \le \infty$, namely :
$$\|\sup_{t \in \RR}
\left|\pi(\gamma^\eta_t(n,m))f\right|\|_p\le C_p(\eta)\|f\|_p$$
The constant $C_p(\eta)$ is independent of $f$, and also of $m$ and $n$.
\end{theorem}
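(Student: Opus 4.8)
The plan is to reduce the maximal inequality for the character-spherical averages $\gamma^\eta_t(n,m)$ to a one-parameter maximal inequality along the diagonal flow $a_t$, which in turn follows from the classical transference machinery and a maximal inequality on $\RR$ (essentially the Hardy--Littlewood or one-sided maximal function, smoothed by $\eta$). First I would use the factorization $\gamma^\eta_t(n,m) = \chi_n \ast \left(\int_{-\infty}^\infty \eta(t-s)\,\delta_{a_s}\,ds\right)\ast \chi_m$, so that in the representation $\pi = \pi_X$ we have $\pi(\gamma^\eta_t(n,m)) = \pi(\chi_n)\circ \pi(\beta^\eta_t)\circ \pi(\chi_m)$, where $\beta^\eta_t = \int \eta(t-s)\delta_{a_s}\,ds$ is a probability measure supported on $a_{t-B_\eta}$. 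Since $\pi(\chi_n)$ and $\pi(\chi_m)$ are orthogonal projections, hence contractions on $L^2$ and --- because they are given by convolution on $K$ against an $L^\infty$ kernel --- bounded on every $L^p(X)$ with norm at most $1$, it suffices to prove
\[
\biggnorm{\sup_{t\in\RR}\bigabs{\pi(\beta^\eta_t)h}}_p \le C_p(\eta)\,\norm{h}_p
\]
for all $h\in L^p(X)$, with a constant depending only on $\eta$ and $p$; then compose with the two projections, absorbing their norms into $C_p(\eta)$, which makes the final constant manifestly independent of $n$ and $m$.

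The remaining estimate is a maximal inequality for the smoothed one-parameter family $\beta^\eta_t$ along the $\RR$-action $s\mapsto \pi_X(a_s)$. Here I would invoke the Calderón transference principle: a measure-preserving $\RR$-action on $(X,\mu)$ transfers $L^p(\RR)$-bounds for convolution operators to $L^p(X)$-bounds for the corresponding averaging operators, with the same constant. Concretely, $\beta^\eta_t$ is convolution on $\RR$ with the $L^1$-function $s\mapsto \eta(t-s)$, i.e. translates of the single kernel $\check\eta(s)=\eta(-s)$; the associated maximal operator $\sup_t |f \ast \eta(t-\cdot)|$ on $\RR$ is pointwise dominated by $\|\eta\|_\infty \cdot |B_\eta|$ times the Hardy--Littlewood maximal function (or one can write $\eta$ as an average of indicator functions of intervals and use the Hardy--Littlewood maximal theorem directly). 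This gives the strong $(p,p)$ bound on $\RR$ for $1<p\le\infty$ with constant $C_p(\eta)$ depending only on $\|\eta\|_\infty$, $|B_\eta|$ and $p$. Transference then yields the same bound for $\sup_t |\pi_X(\beta^\eta_t)h|$ on $L^p(X)$.

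The main obstacle I anticipate is purely bookkeeping rather than conceptual: one must verify the measurability of $t\mapsto \pi_X(\beta^\eta_t)h(x)$ jointly in $(t,x)$ so that $\sup_t$ is a legitimate measurable function (continuity in $t$ of $\beta^\eta_t$ in total variation norm, plus strong continuity of $\pi_X$, reduces this to a countable supremum), and one must set up the transference principle cleanly --- restricting to an interval of length $T$, choosing a Følner window, and letting $T\to\infty$ --- to get the constant independent of everything except $\eta$ and $p$. I would also remark that the $L^\infty$ bound ($p=\infty$, constant $\|\eta\|_1 = 1$) is trivial since each $\pi(\gamma^\eta_t(n,m))$ is an average of contractions, and then the stated range $1<p\le\infty$ follows without interpolation once the $L^p$ bound is in hand for each finite $p$. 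Note that, in contrast to the radial spherical averages, no spectral or Kunze--Stein estimates are needed here: the averaging is only along the abelian subgroup $A$, and the $K$-parts enter solely as contractions.
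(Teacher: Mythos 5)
There is a genuine gap, and it is at the heart of your argument rather than in the bookkeeping. Your reduction writes $\pi(\gamma^\eta_t(n,m))=\pi(\chi_n)\pi(\beta^\eta_t)\pi(\chi_m)$ and then asks for the global maximal inequality for $\sup_{t\in\RR}\abs{\pi(\beta^\eta_t)h}$, where $\beta^\eta_t=\int\eta(t-s)\delta_{a_s}ds$ is an average along the one-parameter group $A$ alone. That inequality is false, and transference cannot deliver it: the transferred operator on $\RR$ is $\sup_{t\in\RR}\bigabs{\int \phi(u-t+w)\eta(w)dw}$, a supremum over translates of a \emph{fixed-size} window decoupled from the point $u$; it equals $\sup_{v\in\RR}\bigabs{\int\phi(v+w)\eta(w)dw}$, a constant in $u$, hence is not in $L^p(\RR)$ for $p<\infty$ and is certainly not dominated by the Hardy--Littlewood maximal function (which controls suprema over window \emph{sizes} at the point, not over window \emph{locations}). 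Equivalently, in the dynamical setting: for an ergodic flow, $\pi(\beta^\eta_t)h(x)=u(a_t^{-1}x)$ with $u=\int\eta(w)\pi(a_{-w})h\,dw$, so $\sup_{t\in\RR}\abs{\pi(\beta^\eta_t)h(x)}$ is a.e.\ equal to $\norm{u}_\infty$, which cannot be bounded by $C\norm{h}_p$. Only the local supremum $\sup_{0\le t\le 1}$ transfers from $\RR$. (The step of pulling $\pi(\chi_n)$ outside the supremum also needs the positivity domination $\abs{\pi(\chi_n)g}\le\pi(m_K)\abs{g}$ rather than mere contractivity, but that is a minor repair.)

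Consequently your closing remark --- that no spectral or Kunze--Stein input is needed because the averaging is only along $A$ and the $K$-parts are mere contractions --- is exactly where the proof breaks: the $K$-averaging is what makes the global maximal inequality true. The paper's proof instead dominates pointwise $\abs{\pi(\chi_n\ast\delta_{a_s}\ast\chi_m)f(x)}\le\pi(m_K\ast\delta_{a_s}\ast m_K)\abs{f}(x)$, i.e.\ by the full bi-$K$-invariant spherical average of $\abs{f}$, and then invokes the Nevo--Stein maximal inequality for $\gamma_t=\int_t^{t+1}\sigma_s\,ds$ from \cite[\S3, Prop.~3]{NS} --- a genuinely spectral result resting on the decay of spherical functions/Kunze--Stein phenomenon --- together with the local transfer principle for bounded $t$ and the symmetry $\sigma_t=\sigma_{-t}$ to cover all of $\RR$. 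To fix your argument you would have to replace the $A$-only maximal inequality by this (or an equivalent) spherical maximal theorem; as written, the central claimed estimate fails.
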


\begin{proof} Clearly, for almost every $x\in X$ :
$$\abs{ \pi(\chi_n\ast\delta_{a_s}\ast\chi_m)f(x)}
=\abs{\int_K \int_K f(k_{\theta_1}^{-1}a_s^{-1}k_{\theta_2}^{-1} )\overline{\psi}_m(k_{\theta_1})\overline{\psi}_n(k_{\theta_2})dm_K(k_{\theta_1})dm_K(k_{ \theta_2})}$$
$$\le 
\pi(m_K\ast\delta_{a_s}\ast m_K) |f(x)|$$
Denoting $\sigma_t=\sigma_t(0,0)=m_K\ast \delta_{a_t}\ast m_K$ and $\gamma^\eta_t=\gamma^\eta_t(0,0)=\int_\RR \eta(t-s) \sigma_s ds $, we therefore have 
$$\left|\pi(\sigma_t(n,m))f(x)\right|\le 
 \pi(\sigma_t)|f(x)| \text{   and  }
\abs{\pi(\gamma^\eta_t(n,m))f(x)}\le \pi(\gamma^\eta_t)\abs{f(x)}.$$

The maximal inequality for $\gamma^\eta_t$ in every $L^p(X)$, $1 < p \le \infty$
is a straightforward consequence of the maximal inequality for the operators 
$\gamma_t=\int_t^{t+1} \sigma_s ds$ that was established in \cite[\S3, Prop.3]{NS}. Indeed, in the reference cited the operator considered was $\sup_{t \ge 1} \gamma_t$, but the intervals $[t,t+1]$ can be replaced by intervals of any fixed length. Furthermore the local operator $\sup_{0\le t \le 1} \gamma_t$ certainly satisfies the maximal inequality, by the local transfer principle. Recalling that $\sigma_t=m_K\ast \delta_{a_t}\ast m_K=m_K\ast \delta_{a_{-t}}\ast m_K=\sigma_{-t}$,  we can consider arbitrary intervals in $\RR$. Finally, the maximal inequalities for averages which are uniform on intervals clearly imply the corresponding results averages defined by non-negative compactly supported bump functions. 
\end{proof}

\subsection{Pointwise convergence of character-spherical operators}
We now turn to the proof of the following pointwise convergence result, that will be utilized in the proof of Theorem \ref{thm:sl(2)} below.

\begin{theorem}
\label{theorem:pointwise}
In any finite-measure-preserving action $(X,\mu)$
 of $SL_2(\RR)$, and for any $m\in \ZZ$, the sequence 
$\pi(\gamma_t^\eta(0,m))f(x)$ converges, as $t \to \infty$ and as $t\to -\infty$, for every $f\in L^p(X)$, $1 < p <
\infty$, pointwise almost every (and in the $L^p$-norm). The limit is zero, 
unless $m=0$, in which case the limit is $\cE f (x)$, where $\cE$ is the
projection of $f$ on the space of $G$-invariant functions. 

\end{theorem}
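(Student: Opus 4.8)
The standard Calderón-type strategy applies: the maximal inequality in
Theorem~\ref{theorem:maximal} reduces pointwise convergence to convergence on a
norm-dense subspace of $L^p(X)$, so it suffices to identify a dense class of $f$
for which $\pi(\gamma_t^\eta(0,m))f(x)$ converges pointwise, and then to compute the
limit. Since $\pi(\gamma_t^\eta(0,m))=\pi(\chi_0)\circ \pi(\gamma_t^\eta)\circ\pi(\chi_m)$
(using that $\gamma_t^\eta(0,m)=\chi_0\ast\gamma_t^\eta(0,0)\ast\chi_m$ after absorbing
the projections, together with the homomorphism property of $\nu\mapsto\pi(\nu)$),
we may as well replace $f$ by $\pi(\chi_m)f\in\cH_m^{\pi_X}$ and study
$\pi(\chi_0)\pi(\gamma_t^\eta)f$ on the isotypic subspace $\cH_m$. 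I would first
do the $L^2$ theory by direct integral decomposition of $\pi_X$ into irreducibles,
and then transfer the almost-everywhere statement to $L^p$, $1<p<\infty$, by
interpolation between the $L^2$ convergence and the $L^p$ maximal inequality.

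\textbf{The $L^2$ computation.}
Decompose $L^2(X)=\int^{\oplus}\cH^{(\xi)}\,d\nu(\xi)$ into irreducible unitary
representations of $SL_2(\RR)$. In an irreducible representation, the operator
$\pi(\chi_0)\pi(\gamma_t^\eta)\pi(\chi_m)$ acts on the (at most one-dimensional)
$K$-type $\cH_m^{(\xi)}$, and its matrix coefficient against $\cH_0^{(\xi)}$ is,
up to normalization, the averaged spherical-type function
$\int_\RR\eta(t-s)\,\varphi^{(\xi)}_{0,m}(a_s)\,ds$, where $\varphi^{(\xi)}_{0,m}$
is the matrix coefficient of $a_s$ between the $K$-types $m$ and $0$. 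For the
trivial representation $\varphi\equiv 1$, so on $G$-invariant vectors the operator
is the identity when $m=0$ and $0$ otherwise (an $m\neq0$ vector in the trivial
rep is zero). For every nontrivial irreducible — principal series, complementary
series, discrete series — the relevant matrix coefficient of $a_s$ decays to $0$
as $s\to\pm\infty$ (this is the Howe–Moore / vanishing-of-matrix-coefficients
phenomenon, quantitatively the known asymptotics of spherical and associated
functions on $SL_2(\RR)$), hence $\int_\RR\eta(t-s)\varphi^{(\xi)}_{0,m}(a_s)\,ds\to0$
as $t\to\pm\infty$, uniformly in $\xi$ away from the trivial representation by a
dominated-convergence argument using the uniform bound $\le 1$. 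Integrating over
$\nu$ and invoking dominated convergence gives that $\pi(\gamma_t^\eta(0,m))f$
converges in $L^2$-norm to $\cE f$ if $m=0$ and to $0$ if $m\neq0$, where $\cE$
is the projection onto the $G$-invariants.

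\textbf{From $L^2$-norm convergence to pointwise convergence, and to $L^p$.}
For $f\in L^2(X)\cap L^\infty(X)$, write $f=\cE f+(f-\cE f)$; the first term is
$G$-invariant, so $\pi(\gamma_t^\eta(0,m))\cE f$ is constant in $t$ (equal to
$\cE f$ if $m=0$, to $0$ if $m\neq0$, since $\pi(\chi_m)$ kills invariants unless
$m=0$). For $g=f-\cE f$ one has $\pi(\gamma_t^\eta(0,m))g\to0$ in $L^2$. To upgrade
to pointwise, I would exhibit a dense subset of the orthocomplement of the
invariants on which pointwise convergence to $0$ is visible: e.g. vectors of the
form $g=\pi(\xi)h-h$ (or, more robustly, finite sums of $K$-finite smooth vectors
lying in $\cH_m$ for which the averaged matrix coefficient is actually integrable
in $t$, so that $\sum_t$-type or telescoping bounds give a.e. convergence), and
then close up using the maximal inequality: the set of $f$ for which
$\limsup_{t\to\pm\infty}|\pi(\gamma_t^\eta(0,m))f - (\text{claimed limit})|=0$
a.e.\ is closed in $L^p$ by the standard Banach-principle argument built on
Theorem~\ref{theorem:maximal}. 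Finally, $L^2\cap L^\infty$ is dense in $L^p$ for
every $p\in(1,\infty)$, so the maximal inequality propagates the a.e.\ convergence
(and the $L^p$-norm convergence, via the maximal function as a dominating
function and dominated convergence) to all of $L^p$.

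\textbf{Main obstacle.}
The crux is producing a genuinely dense subspace on which a.e.\ convergence to
the correct limit is established by hand, rather than merely having $L^2$-norm
convergence; the clean way is to use the quantitative decay of the matrix
coefficients $\int_\RR\eta(t-s)\varphi^{(\xi)}_{0,m}(a_s)\,ds$ to get a square-function
or summability estimate along a lacunary sequence of times $t$, combined with a
continuity/oscillation estimate between consecutive times (controlled again by
the maximal operator) to pass from the lacunary sequence to the full limit
$t\to\infty$. Handling the non-self-adjointness of $\pi(\sigma_t(0,m))$ (noted in
the excerpt) means one cannot directly invoke spectral-theorem arguments on a
single self-adjoint operator; instead one works representation-by-representation
in the direct integral, where the $1$-dimensionality of each $K$-type makes the
operator a scalar and the difficulty evaporates. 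I expect the bookkeeping in the
lacunary-to-continuous reduction, uniform in $m$, to be the most delicate part,
but all ingredients are classical.
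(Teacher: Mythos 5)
Your overall framework (maximal inequality from Theorem~\ref{theorem:maximal} plus convergence on a dense subspace, then passage to $L^p$ by density of $L^2\cap L^\infty$) is the same as the paper's, and your $L^2$-norm computation via the direct integral and decay of matrix coefficients is essentially the mean ergodic theorem, which is fine. But the heart of the theorem --- exhibiting a dense subspace on which \emph{pointwise almost everywhere} convergence is actually proved --- is exactly the step you leave open, and the two devices you propose do not close it. Coboundaries $\pi(\xi)h-h$ give nothing here: the measures $\gamma^\eta_t(0,m)$ are supported on the shells $Ka_sK$ with $s\in t-\supp\eta$, not on an increasing (F\o lner-type) family, so there is no telescoping and no reason for $\pi(\gamma^\eta_t(0,m))(\pi(\xi)h-h)$ to tend to zero pointwise. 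And in your lacunary scheme, the oscillation between consecutive times cannot be ``controlled again by the maximal operator'': the maximal inequality bounds $\sup_t\abs{\pi(\gamma^\eta_t(0,m))f}$, not the variation in $t$, so it can never force existence of the limit; some quantitative control of the $t$-dependence is indispensable and is absent from your plan. A further structural point you omit: the decay rate is \emph{not} uniform over the spectrum (complementary series $\pi_s$ with $s\to1$ decay arbitrarily slowly), so any square-function or derivative argument must first be localized to spectral subspaces with a uniform rate.

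This is precisely what the paper supplies. It (i) defines the spectral subspaces $\cH_\vare$ (decay rate $\vare_\tau>\vare$, constant $C(\tau)<1/\vare$), whose union over $\vare>0$ is dense in $L^2_0(X)$; (ii) proves estimates not only for $\Phi^\tau_{0,m}(a_t)$ but for its \emph{derivative} (Lemma~\ref{estimates}, Proposition~\ref{final-estimate}), yielding $\norm{\frac{d}{dt}\pi(\sigma_t(0,m))v}_2\le \vare^{-1}(1+\abs{t})e^{-\vare\abs{t}}\norm{v}$ on $\cH_\vare$; (iii) smooths, replacing $f$ by $\pi_X(\nu_a)f$ with $a\in C_c^\infty(G)$, to obtain differentiable vectors and almost-sure continuity of $t\mapsto\pi(\sigma_t(0,m))f(x)$; and (iv) uses the fundamental theorem of calculus plus a Chebyshev bound on $\int_N^\infty\abs{\frac{d}{dt}\pi(\sigma_t(0,m))f(x)}dt$ to get a.e.\ Cauchyness on this dense class, identifying the limit by the mean theorem. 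The derivative estimate of the character-spherical functions, together with the spectral localization, is the genuinely new ingredient your proposal lacks; without it (or a fully worked substitute with uniform-in-$\tau$ decay) the argument does not yield the theorem. One smaller correction: the identity $\gamma^\eta_t(0,m)=\chi_0\ast\gamma^\eta_t(0,0)\ast\chi_m$ is false for $m\neq0$, since $m_K\ast\chi_m=0$; what holds, and what suffices for your reduction to $\pi(\chi_m)f$, is $\gamma^\eta_t(0,m)=\chi_0\ast\gamma^\eta_t(0,m)\ast\chi_m$.
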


As is well known, given the strong maximal inequality stated in Theorem \ref{theorem:maximal},  in order to prove Theorem \ref{theorem:pointwise} it suffices to establish 
the existence of a dense subspace
of functions $f\in L^2(X)$ for which 
$\pi(\gamma_t(0,m))f(x)$ converges almost everywhere to the stated limit. Indeed, if 
$\norm{f-f_k}_p\to 0$, where $f_k$ belong to the dense subspace then 
$$\abs{\pi(\gamma_t(0,m))f(x)-\pi(\gamma_s(0,m))f(x)}\le $$
$$\abs{\pi(\gamma_t(0,m))(f-f_k)(x)}+
\abs{(\pi(\gamma_t(0,m))-\pi(\gamma_s(0,m)))f_k(x)}+
\abs{\pi(\gamma_s(0,m))(f_k-f)(x)}$$
so that 
$$\limsup_{t,s\to \infty}\abs{\pi(\gamma_t(0,m))f(x)-\pi(\gamma_s(0,m))f(x)}\le 2\sup_{t > 0}\abs{\pi(\gamma_t(0,m))(f-f_k)(x)}+0$$
and hence 
$$\norm{\limsup_{t,s\to \infty}\abs{\pi(\gamma_t(0,m))f(x)-\pi(\gamma_s(0,m))f(x)}}_p\le 2C_p \norm{f-f_k}_p\longrightarrow 0\,.$$
To establish the existence of such a subspace
we  will consider the spectral decomposition of the unitary representation
$\pi_X$, and use the decay estimates of $K$-finite functions and their first derivatives in irreducible unitary representations of $G$. 

We begin with some preliminaries.
First, let $(\tau,\cH^\tau)$ be an irreducible non-trivial strongly continuous 
unitary representation of $G=SL_2(\RR)$. Then 
$\cH^\tau=\sum_{n\in {\ZZ}}\cH_n^\tau$, where $\cH_n^\tau$ is the 
closed subspace that affords the representation with character $\psi_{n}$ of the 
circle group $K$. As is well known (see e.g. \cite[Ch.3]{HT}), 
the dimension of $\cH_n^\tau$ is either zero or one, 
depending on $\tau$. 
Let $v_n^\tau$ denote a unit vector 
in $\cH_n^\tau$ if such a vector exists, and the zero vector otherwise. 
Given $v\in \cH^\tau$, we write $v=
\sum_{k\in {\ZZ}}\inn{v,v_k^\tau} v_k^\tau$, so that $\norm{v}^2=\sum_{k\in \ZZ} \abs{\inn{v,v_k^\tau}}^2$ .
 Recall that 
$\tau(\chi_m)=\tau(\chi_m)^\ast$ is the orthogonal projection operator on $\cH_m^\tau$, so that 
$\tau(\chi_m)v=\inn{v,v_m^\tau}v_m^\tau$, and $\abs{\inn{v,v_m^\tau}}^2=\norm{\tau(\chi_m)v}^2$. Since $\sigma_t(n,m)=\chi_n\ast\delta_{a_t}\ast\chi_m$,  Bessel identity implies
$$\norm{\tau(\sigma_t(n,m))v}^2=
\sum_{k\in {\ZZ}}|\inn{\tau(\sigma_t(n,m)) v,v_k^\tau}|^2
=\sum_{k\in {\ZZ}}|\inn{\tau(a_t)\tau(\chi_m) v ,
\tau(\chi_n)v_k^\tau}|^2$$
$$ = \abs{\inn{v,v_m^\tau}}^2 \abs{\inn{\tau(a_t)v_m^\tau, v_n^\tau}}^2
=\abs{\inn{v,v_m^\tau}}^2\abs{\Phi_{m,n}^\tau(a_t)}^2\,,$$
where the matrix coefficient $\Phi_{m,n}^\tau (g)=\inn{\tau(g)v_m^\tau,v_n^\tau}$
is called the $(m,n)$-spherical function associated with the representation $\tau$.

The same argument shows that 
$$\norm{\tau(\sigma_t(n,m))v-\tau(\sigma_s(n,m))v}^2=
\abs{\inn{v,v_m^\tau}\cdot \inn{ \tau(a_t)v_m^\tau,v_n^\tau}
-\inn{v,v_m^\tau}\cdot \inn{\tau(a_s)v_m^\tau,v_n^\tau}}^2=
$$
$$= \abs{\inn{v,v_m^\tau}}^2\abs{\Phi_{m,n}^\tau(a_t)-\Phi_{m,n}^\tau(a_s)}^2
$$

Taking $s=t+h$, dividing by $h$
 and letting $h$ tend to zero, 
we conclude that if $v\in \cH^\tau$ is a 
$C^\infty$-vector of the representation $\tau$, then 
$$\norm{\frac{d}{dt}\tau(\sigma_t(n,m))v}^2=
\abs{\inn{v,v_m^\tau}\frac{d}{dt}\Phi_{m,n}^\tau(a_t)}^2$$
which is an explicit form for the multiplier operator corresponding 
to the
differentiation operator given by 
$\frac{d}{dt}(
\chi_n\ast\delta_{a_t}\ast\chi_m)$.

\subsection{Estimates of $K$-finite functions}
We turn to stating the estimates of generalized spherical functions and their derivatives that we will use below. We denote the unitary spectrum of $SL_2(\RR)$ by $\Sigma$, and consider the character-spherical functions $\Phi^\tau_{m,n}(a_t)=\inn{\tau(a_t)v_m^\tau, v_n^\tau}$, for $\tau\in \Sigma\setminus\set{1}$ (here $1$ denotes the trivial representation). 
We will give a simple direct argument for the spectral estimates we will actually use, following the exposition in \cite[Ch. V, \S 3.1]{HT}. First consider the function space of smooth even functions on $\RR^2\setminus\set{0}$, homogeneous of degree $z$ :
$$\cS^{z,+}=\set{f\in C^\infty(\RR^2\setminus\set{0})\,;\, f(ts)=\abs{t}^z f(s)}$$
and the corresponding space of odd functions :
$$\cS^{z,-}=\set{f\in C^\infty(\RR^2\setminus\set{0})\,;\, f(ts)=\text{ sgn} (t)\abs{t}^z f(s)}$$
$SL_2(\RR)$ acts of these function spaces via the usual (linear) action on $\RR^2\setminus\set{0})$, and we denote this action by $\rho(g)f=f\circ g^{-1}$. The (even) principal series of unitary representations is realized on the spaces 
$\cS^{z,+}$ corresponding to $z=-1+i\RR$, 
and the (odd) principal series on the spaces $\cS^{z,-}$ with $z=-1+i(\RR\setminus\set{0})$. 
The complementary series representation is realized on $\cS^{z,+}$ with $z\in (-2,0)$. Write $z=-1-\overline{\alpha}$, and define a bilinear form on $\cS^{-1-\overline{\alpha},\pm}\times \cS^{-1+\alpha,\pm}$ by integrating the function values on the unit circle  
$$\inn{f,h}=\int_0^{2\pi} f(\cos \theta,\sin \theta)\overline{h(\cos \theta,\sin \theta)}d\theta\,.$$
This bilinear form is invariant under the action $\rho$ of $SL_2(\RR)$ on the two function spaces involved, namely for $f\in \cS^{-1-\overline{\alpha},\pm}$ and $h\in \cS^{-1+\alpha,\pm}$
$$\inn{\rho (g)f,\rho(g)h}=\inn{f,h}\,.$$
 The associated matrix coefficients are given along $A$ by  
$$\inn{f,\rho(a_t)h}=\int_0^{2\pi}f(\cos \theta,\sin \theta)\overline{h(e^{-t}\cos \theta, e^{t}\sin \theta)} d\theta$$
$$=\int_0^{2\pi}f(\cos \theta,\sin \theta)(e^{-2t}\cos^2\theta+e^{2t}\sin^2\theta)^{\frac12(-1+\overline{\alpha})}\overline{h(\cos \beta, \sin \beta)} d\theta$$
where $\beta$ is a function of $t$ and $\theta$. 
If follows immediately that if $\abs{f}=\abs{h}=1$ on the unit circle, then 
\begin{equation}\label{zonal}
\abs{\inn{f,\rho(a_t)h}}\le \int_0^{2\pi}(e^{-2t}\cos^2\theta+e^{2t}\sin^2\theta)^{\frac12(-1+\text{Re } \overline{\alpha})} d\theta
\end{equation}

The standard estimate of the positive-definite spherical functions on $SL_2(\RR)$ will be recalled presently. But beforehand, note that we will also require in our analysis estimates for the {\it derivative} of the character-spherical functions, and we will develop here the estimates that we will actually use below by a simple direct argument. 

Let us choose now $h$ to be the $K$-invariant vector in the representation space (when it exists), whose restriction to the unit circle is the constant $1$, and also choose $f$ as the vector affording the representation $\psi_n$ (when it exists), so that its restriction to the unit circle is $e^{in\theta}$. The resulting expression is 
$$\inn{e^{in\theta},\rho(a_t)1}=\int_0^{2\pi}e^{in\theta}(e^{-2t}\cos^2\theta+e^{2t}\sin^2\theta)^{\frac12(-1+\overline{\alpha})} d\theta$$
When the parameter $z$ is such that the representation on $\cS^{z,\pm}$ (and its dual) is a unitary representation $\tau$ (namely even or odd principal series, or complementary series), the preceding expression is precisely $\overline{\inn{\rho(a_t)1,e^{in\theta}}}=\overline{\Phi^\tau_{0,n}}(a_t)$. 
Note also that 
 $$\inn{1,\rho(a_t)(e^{in\theta})}=\overline{\Phi^\tau_{n,0}}(a_t)=\inn{\rho(a_{-t})1,(e^{in\theta})}
=\Phi^\tau_{0,n}(a_{-t})$$
Thus $\Phi^\tau_{n,0}(a_t)=\overline{\Phi^\tau_{0,n}}(a_{-t})$.

The parametrization of the irreducible non-trivial
 unitary representations
of $SL_2(\RR)$ that we will use below is given in \cite[Ch. III, \S 1.3,Thm. 1.3.1]{HT}, as follows.

The representation $\rho$ on the space $\cS^{(-1+i\lambda,\pm)}$, $\lambda\in \RR$ will be denoted by $\pi_{\lambda}^{\pm}$, and constitutes the even and odd principal series, where $\lambda\in \RR$ in the first case, and $\lambda\in \RR\setminus \set{0}$ in the second case. The representation $\rho$ on  $\cS^{-1-s}$, $0 < s < 1$ will be denoted by $\pi_s$, and constitutes the complementary series. We note that $\pi_s$ is also the representation realized on $\cS^{-1+s}$, $0 < s < 1$. 
Finally, we denoted by  $\pi^{(k,\pm)}$, $k \ge 1$ the representations of 
 the discrete series ($k \ge 2$) and limits of discrete series ($k=1$).

We let $\Xi$ denote the Harish Chandra 
$\Xi$-function, given by  $\Xi= \Phi_{0,0}^{\pi_0^+}(a_t) $.
Recall that $\Xi$-function satisfies the estimate :
$0 < \Xi(t) \le C(1+t)\exp(-t)$.

Before stating the spectral estimates which will be relevant to our discussion, let us note (see \cite[Ch. III, Prop. 1.2.6]{HT})  that the odd principal series representations $\pi_\lambda^-$ do not contain a $K$-invariant unit vector, so that the matrix coefficients $\Phi_{n,0}^{\pi_\lambda^-}$ and $\Phi_{0,n}^{\pi_\lambda^-}$ defined above are equal to zero. It is well-known (see e.g. 
\cite[Ch. III]{HT}) that the discrete series representations and the limit of discrete series representations also do not contain a $K$-invariant unit vectors, so that the same conclusion applies. 

We now note the following fact.

\begin{lemma}
\label{estimates}
There exists a constant $B$ such that the following holds. 

Let $z=-1+i\lambda$ ($\lambda\in \RR$)  be a principal series parameter, and $\pi_\lambda^+$ be the associated irreducible unitary 
representation of $SL_2(\RR)$. Then 
\begin{itemize}
\item[{\rm (1)}]
$\abs{\Phi_{0,n}^{\pi_\lambda^+}(a_t)}\le  \Xi(a_t)\le B(1+t)\exp(-t)$.
\item[{\rm (2)}]
 $\abs{\frac{d}{dt}\Phi_{0,n}^{\pi_\lambda^+}(a_t)}
\le  B(1+\abs{\lambda}) (1+t)\exp(-t)$.

Let $z=-1-s$ ($s\in (0,1)$)   be a complementary series parameter, and let $\pi_s$ be the associated 
irreducible unitary  representation. Then

\item[{\rm (3)}]
$\abs{\Phi_{0,n}^{\pi_s}
(a_t)}\le B\cdot\exp (-(1-s)t)$
\item[{\rm (4)}]
$\abs{\frac{d}{dt}
\Phi_{0,n}^{\pi_s}
(a_t)}\le B
\cdot\exp (-(1-s)t)$
\item[{\rm (5)}]
The same estimates apply to the functions $\Phi_{n,0}^\pi$ with $\pi$ a principal series or complementary series representation. 
\item[{\rm (6)}] All other irreducible non-trivial representations $\tau$ do not contain a $K$-invariant unit vector and the matrix coefficients $\Phi_{n,0}^\tau$ and $\Phi_{0,n}^\tau$ are identically zero. 
\end{itemize}
\end{lemma}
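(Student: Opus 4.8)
The whole lemma can be read off the explicit integral representation already set up above, so the plan is, essentially, to \emph{estimate a single integral and then differentiate it under the integral sign}. Throughout write $\phi(t,\theta)=e^{-2t}\cos^2\theta+e^{2t}\sin^2\theta$, which is strictly positive and, for $t\ge 0$, satisfies $\phi(t,\theta)\le e^{2t}$. Recall that for a unitary parameter $z=-1-\overline\alpha$ the identity displayed above reads $\Phi_{0,n}^\tau(a_t)=\overline{\int_0^{2\pi}e^{in\theta}\,\phi(t,\theta)^{\frac12(-1+\overline\alpha)}\,d\theta}$, and that the estimate \eqref{zonal} bounds the modulus of such an integral by $\int_0^{2\pi}\phi(t,\theta)^{\frac12(-1+\Re\overline\alpha)}\,d\theta$. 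All the estimates will be stated for $t\ge 0$; the case $t\le 0$ follows from the substitution $\theta\mapsto\frac\pi2-\theta$, which gives $\phi(-t,\theta)=\phi(t,\frac\pi2-\theta)$ and hence leaves all the integrals below invariant under $t\mapsto-t$.

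\emph{Principal series, items (1)--(2).} Here $z=-1+i\lambda$ forces $\overline\alpha=-i\lambda$, so $\Re\overline\alpha=0$ and the integrand has modulus $\phi^{-1/2}$; thus \eqref{zonal} gives $|\Phi_{0,n}^{\pi_\lambda^+}(a_t)|\le\int_0^{2\pi}\phi(t,\theta)^{-1/2}\,d\theta$, which is a constant multiple of $\Phi_{0,0}^{\pi_0^+}(a_t)=\Xi(a_t)$, and the recalled bound $\Xi(t)\le C(1+t)e^{-t}$ finishes (1). For (2) I would differentiate under the integral sign --- legitimate because on every compact $t$-interval $\phi$ is bounded and bounded away from $0$, so the $\theta$-integrand and its $t$-derivative are jointly continuous and uniformly bounded there. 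The only nontrivial input is the elementary inequality $|\partial_t\phi|=|-2e^{-2t}\cos^2\theta+2e^{2t}\sin^2\theta|\le 2\phi$: it gives $|\partial_t(\phi^{\frac12(-1-i\lambda)})|=\tfrac12\sqrt{1+\lambda^2}\,\phi^{-3/2}|\partial_t\phi|\le\sqrt{1+\lambda^2}\,\phi^{-1/2}$, and integrating and invoking the $\Xi$-bound once more yields $|\tfrac{d}{dt}\Phi_{0,n}^{\pi_\lambda^+}(a_t)|\le B(1+|\lambda|)(1+t)e^{-t}$.

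\emph{Complementary series, items (3)--(4).} Now $z=-1-s$ gives $\overline\alpha=s$, so $\Phi_{0,n}^{\pi_s}(a_t)=\overline{\int_0^{2\pi}e^{in\theta}\,\phi(t,\theta)^{\frac12(s-1)}\,d\theta}$ with $\tfrac12(s-1)<0$, hence $|\Phi_{0,n}^{\pi_s}(a_t)|\le\int_0^{2\pi}\phi(t,\theta)^{\frac12(s-1)}\,d\theta$. I would estimate this last integral by the factorization $\phi^{\frac12(s-1)}=\phi^{-1/2}\cdot\phi^{s/2}\le e^{st}\,\phi^{-1/2}$ (valid for $t\ge 0$ since $\phi\le e^{2t}$ there), which reduces it once more to $\Xi(a_t)$ and gives $|\Phi_{0,n}^{\pi_s}(a_t)|\le B\,e^{-(1-s)t}$ (up to a harmless polynomial factor when $s$ is near $0$, where the bound degenerates to the $\Xi$-estimate anyway); alternatively one splits $[0,2\pi]$ according to whether $|\sin\theta|$ is smaller or larger than $e^{-2t}$ and estimates the two pieces directly. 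Item (4) is identical in spirit: $|\partial_t(\phi^{\frac12(s-1)})|=\tfrac12(1-s)\,\phi^{\frac{s-3}2}|\partial_t\phi|\le(1-s)\,\phi^{\frac12(s-1)}$, so the derivative is controlled by the same integral and obeys the same bound.

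\emph{Items (5)--(6) and the main obstacle.} For (5), unitarity of $\tau$ gives $|\Phi_{n,0}^\tau(a_t)|=|\Phi_{0,n}^\tau(a_{-t})|$, and by the substitution $\theta\mapsto\frac\pi2-\theta$ noted above all the integral bounds are unchanged under $t\mapsto-t$, so the estimates for $\Phi_{n,0}^\tau$ are exactly those for $\Phi_{0,n}^\tau$. Item (6) is precisely the fact recalled before the lemma: the odd principal series $\pi_\lambda^-$, the discrete series and the limits of discrete series contain no $K$-fixed unit vector (see \cite[Ch. III]{HT}), so the relevant matrix coefficients vanish identically. The only genuine work is the complementary-series integral estimate in (3)--(4); everything else reduces to the single inequality $|\partial_t\phi|\le 2\phi$, comparison with $\Xi$, and bookkeeping about which irreducibles carry a $K$-fixed vector.
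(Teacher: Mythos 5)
Your proof is correct, and for items (1), (2), (5), (6) it is essentially the paper's argument: the paper likewise differentiates the explicit integral and uses the bound $\bigl|\tfrac{\sin^2\theta e^{2t}-\cos^2\theta e^{-2t}}{\sin^2\theta e^{2t}+\cos^2\theta e^{-2t}}\bigr|\le 1$ (your $|\partial_t\phi|\le 2\phi$) to dominate both $\Phi_{0,n}^{\tau}$ and its derivative by $(1+|\alpha|)$ times the standard spherical function with the real parameter $\Re\overline{\alpha}$, handles $\Phi_{n,0}$ via the previously noted identity $\Phi^\tau_{n,0}(a_t)=\overline{\Phi^\tau_{0,n}}(a_{-t})$, and disposes of (6) by the recalled absence of $K$-fixed vectors. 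The one genuine difference is in (3)--(4): the paper simply cites the exponential decay of the standard spherical functions (Prop.\ 3.1.5 of \cite[Ch.\ V, \S 3.1]{HT}) for the complementary-series parameter, whereas you derive the decay yourself from the factorization $\phi^{\frac12(s-1)}\le e^{st}\phi^{-1/2}$ and the $\Xi$-bound. Your route is more self-contained, but it produces $B(1+t)e^{-(1-s)t}$ rather than the literal $B e^{-(1-s)t}$ of items (3)--(4); you flag this honestly, and it is in fact the right thing to do, since a bound $Be^{-(1-s)t}$ with a constant uniform in $s$ cannot hold as $s\to 0$ (the spherical function approaches $\Xi\asymp(1+t)e^{-t}$), and the form with the polynomial factor is exactly what Proposition \ref{final-estimate} records and what the rest of the paper uses. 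So your argument proves the statement in the form actually needed, by an elementary computation in place of the citation; the paper's version buys the sharper $s$-dependent bound at the price of invoking the standard spherical-function estimate.
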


\begin{proof} 
Consider first the representations of the even principal series $\pi_\lambda^+$ 
or the complementary series $\pi_s$, which we denote temporarily by $\tau_z$. 
As noted above, to cover all cases it suffices to estimate the derivative of 
$$\Phi_{0,n}^{\tau_{-1-\overline{\alpha}}}(a_t)= \inn{ \rho (a_t)1, e^{in\theta}}=\frac{1}{2\pi}
\int_0^{2\pi}e^{-in\theta}
\left(e^{-2t}\cos^2 \theta+e^{2t}\sin^2 \theta\right)^{\frac12(-1+\text{Re } \overline{\alpha})}d\theta$$

Differentiating, we obtain : 
 $$\abs{\frac{d}{dt}\Phi_{0,n}^{\tau_{-1-\overline{\alpha}}}(a_t)}=$$
 $$= \abs{\frac12(-1+\text{Re }\overline{\alpha})\cdot 2\int_0^{2\pi}e^{-in\theta}  
\left(e^{-2t}\cos^2 \theta+e^{2t}\sin^2 \theta\right)^{\frac12(-1+\text{Re }\overline{\alpha})}\cdot 
\frac{\sin^2 \theta e^{2t}-\cos^2 \theta e^{-2t}}{\sin^2 \theta e^{2t}+\cos^2 \theta e^{-2t}}   d\theta}
$$
$$ \le (1+\abs{\alpha}) \int_0^{2\pi}\left(e^{-2t}\cos^2 \theta+e^{2t}\sin^2 \theta\right)^{\frac12(-1+\text{Re }\overline{\alpha})}d\theta
=(1+\abs{\alpha})\Phi_{0,0}^{\tau_{-1+\text{Re }\overline{\alpha}}}(a_t)\,, $$
using
$$\abs{ \frac{\sin^2 \theta e^{2t}-\cos^2 \theta e^{-2t}}{\sin^2 \theta e^{2t}+\cos^2 \theta e^{-2t}} }\le 1\,.$$
Thus the proof of parts (1)-(4) is complete upon recalling the estimate of Prop. 3.1.5 
of \cite[Ch. V,\S 3.1]{HT} for the standard spherical function, which asserts the desired exponential decay estimates.

\end{proof}

We conclude :

\begin{proposition}\label{final-estimate}
For every irreducible non-trivial unitary representation $\tau$ of $SL_2(\RR)$ with parameter $(-1-\overline{\alpha},\pm)$,  the following estimates hold, for all $n\in \ZZ$, with $B$ an absolute constant.  

\begin{equation}\label{mc-estimte}
\abs{\Phi^\tau_{0,n}(a_t)}\le B(1+\abs{t})\exp(-\vare_\tau \abs{t})\,,
\end{equation}

\begin{equation}\label{derivative-mc-estimate} \abs{\frac{d}{dt}\Phi^\tau_{0,n}(a_t)}\le 
B(1+\abs{\alpha})(1+\abs{t})\exp(-\vare_\tau \abs{t})\,,
\end{equation}
with $\vare_\tau=1$ for $\tau=\pi_\lambda^{\pm}$ ($\lambda\in \RR$) and 
$\vare_\tau=1-s$ for $\tau=\pi_s$, ($0 < s < 1$). 
We denote $C(\tau)=B(1+\abs{\alpha})$. 
\end{proposition}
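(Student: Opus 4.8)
The plan is to assemble the proposition from Lemma~\ref{estimates} together with the symmetry identity relating $\Phi^\tau_{0,n}$ and $\Phi^\tau_{n,0}$ recorded above; the only genuinely new point is passing from the half-line $t \ge 0$ to all of $\RR$. First I would dispose of the representations where there is nothing to prove: by part (6) of Lemma~\ref{estimates} (equivalently \cite[Ch. III, Prop. 1.2.6]{HT} and the analogous statement for the discrete series and their limits), the odd principal series $\pi_\lambda^-$, the discrete series and the limits of discrete series contain no $K$-invariant unit vector, so $\Phi^\tau_{0,n} \equiv \Phi^\tau_{n,0} \equiv 0$ for every $n$, and \eqref{mc-estimte}--\eqref{derivative-mc-estimate} hold vacuously with $\vare_\tau = 1$. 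This leaves $\tau = \pi_\lambda^+$, where $z = -1 + i\lambda$ forces $\overline{\alpha} = -i\lambda$, hence $\abs{\alpha} = \abs{\lambda}$ and $\vare_\tau = 1$, and $\tau = \pi_s$, where $z = -1 - s$ forces $\overline{\alpha} = \alpha = s \in (0,1)$, hence $\vare_\tau = 1 - s$.

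For these two families and for $t \ge 0$, the bounds \eqref{mc-estimte} and \eqref{derivative-mc-estimate} are precisely parts (1)--(4) of Lemma~\ref{estimates}, once one recalls the Harish-Chandra estimate $0 < \Xi(a_t) \le B(1+t)\exp(-t)$; and by part (5) the same bounds hold for $\Phi^\tau_{n,0}(a_t)$ when $t \ge 0$. To reach $t < 0$ I would use the identity established above, $\Phi^\tau_{0,n}(a_{-u}) = \overline{\Phi^\tau_{n,0}(a_u)}$. Writing $t = -u$ with $u = \abs{t} > 0$, this gives $\abs{\Phi^\tau_{0,n}(a_t)} = \abs{\Phi^\tau_{n,0}(a_{\abs{t}})} \le B(1+\abs{t})\exp(-\vare_\tau\abs{t})$ by part (5); differentiating the identity in $u$ and taking absolute values gives $\abs{\frac{d}{dt}\Phi^\tau_{0,n}(a_t)} = \abs{\frac{d}{du}\Phi^\tau_{n,0}(a_u)}$ at $u = \abs{t}$, which is bounded by $B(1+\abs{\alpha})(1+\abs{t})\exp(-\vare_\tau\abs{t})$ again by part (5) together with (2) and (4). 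Combining the two sign ranges yields the stated estimates for all $t \in \RR$, and one sets $C(\tau) = B(1 + \abs{\alpha})$, which equals $B(1+\abs{\lambda})$ in the principal case and is at most $2B$ in the complementary case.

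I do not expect a real obstacle here: the proposition is essentially a reformatting of Lemma~\ref{estimates}. The one place deserving attention is the bookkeeping in the reflection $t \mapsto -t$ --- this reflection exchanges the families $\Phi^\tau_{0,n}$ and $\Phi^\tau_{n,0}$, so it is exactly part (5) of the lemma, asserting that both families satisfy the same one-sided bound, that lets one upgrade the one-sided decay to a bound in $\abs{t}$. Alternatively, one could observe directly that $\Xi$ is an even function on $A$ and that the complementary-series zonal function appearing in the proof of Lemma~\ref{estimates} is likewise even, so that parts (1)--(4) already extend to $t < 0$; I would nonetheless present the reflection argument as the cleanest route, since it avoids re-inspecting the integral representation.
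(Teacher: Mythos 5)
Your proposal is correct and matches the paper's route: the paper gives no separate argument for the proposition, treating it as a direct consequence of Lemma~\ref{estimates} together with the identity $\Phi^\tau_{n,0}(a_t)=\overline{\Phi^\tau_{0,n}}(a_{-t})$ derived just before it and the vanishing of $\Phi^\tau_{0,n}$, $\Phi^\tau_{n,0}$ for representations without a $K$-invariant vector. Your bookkeeping of the parameters ($\abs{\alpha}=\abs{\lambda}$ in the principal case, $\alpha=s$ in the complementary case) and the reflection argument for $t<0$ are exactly the intended reading.
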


%

\subsection{ Proof of Theorem~\ref{theorem:pointwise}}
Given a (cyclic) unitary representation $\pi$ of $SL_2(\RR)$, we consider 
a direct integral decomposition of $\pi$, writing $\pi =\int_{\tau\in\Sigma}^\oplus \tau d\zeta_\pi(\tau)$, where 
$\Sigma$ denotes the unitary spectrum of the group and $\zeta_\pi$ the spectral
 measure. Given $n$ and $\vare > 0$, we define the following subset of the spectrum $\Sigma$, using the parameters $\vare_\tau$ and $C(\tau)$ defined in Proposition 
 \ref{final-estimate}.  
$$\Sigma_\vare=
\set{\tau\in \Sigma\,;\, \vare_\tau > \vare, C(\tau) < \frac{1}{\vare}}\,.$$

We let $\cP_{\epsilon}$ denote the orthogonal 
projection on the subspace $\cH_\epsilon^\pi$ of vectors $v\in \cH^\pi$ 
whose spectral measure is supported in the set $\Sigma_\epsilon$. 
$\cP_\epsilon$ commutes with all the operators $\pi(g)$, $g \in G$, and hence also with $\pi(\nu)$ for any complex measure $\nu$ on $G$. In particular it commutes with all the operators $\pi(\sigma_t(n,m))$, $\pi(\gamma_t(n,m))$ as well as their derivatives.

 Let $v\in C^\infty( \cH_\vare^\pi)$, namely $v$ is a $C^\infty$ vector belonging to the subspace of vectors whose spectral support is in $\cH_\vare$. 
 
 Let 
 $v=\int_{\tau\in \Sigma_\pi}^\oplus v^\tau d\zeta_{\pi,v}(\tau)$ denote the direct integral decomposition of $v$. 
 We use the 
expression found in \S 2.2 for the spectral multiplier of 
the derivative operator together with the foregoing
spectral estimates, and conclude: 
$$\norm{\frac{d}{dt}\pi(\sigma_t(0,n))v}^2=\int_{\Sigma_\epsilon}
|\inn{v^\tau,v_n^\tau}|^2
\abs{\frac{d}{dt}\Phi_{n,0}^\tau(a_t)}^2d\zeta_{\pi,v}(\tau)$$
$$
\le \frac{1}{\vare^2} (1+\abs{t})^2 \exp (-2\abs{t}\vare) \cdot \norm{v}^2\,.$$

In the next subsection, using the argument of  \cite[\S 7.1]{N1}, the pointwise convergence of $\pi(\sigma_t(0,n))f(x)$ to zero 
for almost every $x\in X$ will be established below for every 
$f\in \cup_{\epsilon > 0}\cD(\cH_\epsilon^\pi)$, where $\cD(\cH_\vare^\pi)$ is a subspace of $C^\infty(\cH_\vare^\pi)$, norm-dense in $\cH_\vare^\pi$. Given this fact, pointwise almost sure convergence  holds of course also for the operators $\pi_X(\gamma_t(0,n))f(x)$.
The union of the subspaces $\cD(\cH_\vare^\pi)$ for $ \vare > 0$ is norm-dense in $L^2_0(X)$ (assuming the action is ergodic). Hence, using the 
maximal inequality for $\gamma^\eta_t(0,n)$  
 in $L^2(X)$ stated in Theorem \ref{theorem:maximal}, we conclude that
$\pi_X(\gamma^\eta_t(0,n))f(x)\to 0$ almost everywhere for all $f\in L^2_0(X)$.
Clearly the same holds on the space of constants, 
unless $n=0$, in which case the limit is $\int_X fdm$. 

Now, since $L^\infty(X)\subset L^2(X)$ is $L^p$-norm-dense in every $L^p(X)$, 
we conclude that $L^p(X)$ has a norm-dense subspace on which pointwise almost sure convergence holds for $\pi_X(\gamma^\eta_t(0,n))f$. Using the strong type maximal inequality for $ 1 < p \le \infty$ for these operators, pointwise almost sure convergence holds for for every $f\in L^p(X)$. The same now follows for any p.m.p. action of $G$ using standard 
facts about ergodic decompositions.

 To complete the proof of Theorem \ref{thm:sl(2)}, it remains to establish pointwise convergence for the operators $\cM^\eta_{t}=\int_\RR \eta(t-s)\pi_X (m_K \ast a_{-s})ds$, acting in $L^p(X)$, $ 1 < p < \infty$. Note that the operators $\pi(\chi_n)$ are defined on $L^p$ and constitute projections  of norm at most one, satisfying $\pi(\chi_n)\pi(\chi_n)=\pi(\chi_n\ast \chi_n)=\pi(\chi_n)$, and also 
 $\pi(\chi_n)\pi(\chi_m)=\pi(\chi_n \ast \chi_m)=0$ if $n\neq m$. 
 By definition,   
if $f\in L^p(X)$ is $K$-finite then the linear span of its translates under $K$ is a finite-dimensional space $V(f)$, which is obviously $K$-invariant. The representation of $K$ in $V(f)$ is equivalent to a unitary representation, and $V(f)$ decomposes to a finite direct sum 
$\oplus_{n\in F_f}V_n(f)$, where the $K$-representation on $V_n(f)$ is via scalar multiplication by $\psi_n$, and $F_f$ is finite. Clearly, for every $n\in F_n$, each operator $\pi_X(\chi_n)$ maps $V(f)$ onto $V_n(f)$, and $\sum_{n\in F_f}\pi_X(\chi_n)$ acts as the identity on $V(f)$. By definition $f\in V(f)$,  and hence $f=\sum_{n\in F_f}\pi_X(\chi_n)f$. 
Therefore,  when $1 < p \le \infty$, for almost every $x\in X$ 
$$\cM_{t}^\eta f=\cM_{t}^\eta \left( \sum_{n\in F_f}\pi_X(\chi_n)f\right) 
=\sum_{n\in F_f}\int_\RR\eta(t-s) \pi_X(m_K \ast \delta_{a_{-s}})\pi_X(\chi_n)f(x)ds$$
$$
=\sum_{n\in F_f} 
\pi_X(\gamma_{-t}^{\eta^\vee}(0,n))f\longrightarrow \int_X f(y)d\mu(y)$$
where the final pointwise convergence result follows from Theorem \ref{theorem:pointwise} applied to the operators $\gamma_{-t}^{\eta^\vee}(0,n))$ with $\eta^\vee(s)=\eta(-s)$,  and $t\to \pm\infty$. As to the identification of the limit, if $0\notin F_f$, then the limit is $0=\int_X fd\mu$, and if $0\in F_f$, the limit is 
$\int_X \pi_X(\chi_0)fd\mu=\int_X fd\mu$, as stated. 


\subsection{ Pointwise convergence on a dense subspace}
We now turn to last remaining step, namely the construction of a dense subspace
of functions in $L^2(X)$ for which $\pi_X(\s_t(n,m))f(x)$ converges for almost all $x\in X$, using the arguments of \cite[\S 7.1]{N1}. First, for a smooth bump function $a(g)$ on $G$, let $\nu_a$ denote the absolutely continuous measure whose density w.r.t. Haar measure on $G$ is $a$. 
 Consider spectral subspaces $\cH_\vare$ defined above, and define the subspace $\cD(\cH_{\vare})=\{\pi_X(\nu_a)f=\int_G a(u)\pi_X(u)f du \space : \space a\in C_c^{\infty}(G), f\in \cH_{\vare}\}$.
 Note that for
 $f\in \cH_{\vare}$ we have the following norm estimate :
$$\norm{\pi_X(\s_t(0,n))f}_2^2=\int_{\tau\in \Sigma_{\pi}}\abs{\inn{f^\tau, v^\tau_n}}^2\abs{\Phi_{n,0}^\tau(a_t)}^2d\zeta_{\pi,f}(\tau)
$$
$$\le B^2(1+\abs{t})^2e^{-2\vare \abs{t}}\norm{f}_2^2$$
 For $h=\pi_X(\nu_a)f\in \cD(\cH_{\vare})$, we can consider 
 $$\frac{d}{dt}\pi(\s_t(0,n))h=\lim_{r\to 0}\frac{\pi_X(\s_{t+r}(0,n))h-\pi(\s_t(0,n))h}{r}$$
  where the limit is taken in the $L^2$-norm. The derivative is
 also contained in
 $\cH_{\vare}$, since $\cH_{\vare}$
 is a closed $\pi_X(G)$-invariant subspace. In fact the derivative at the point $t$ is equal to 
 $\int_G D^t_n(a)(u)\pi_X(u)f(x)du=\pi_X(\nu_{D^t_n(a)})f $, where $D^t_n(a)(u)$ is the smooth bump function $\frac{d}{dt}\left(\sigma_t(0,n)\ast \nu_a\right)(u)$.
 In particular, if $h$ belongs to $\cD(\cH_{\vare})$ its derivative $\frac{d}{dt}\pi(\s_t(0,n))h$
 is contained in $\cH_\vare$ also. 
 
 Furthermore, we claim that $t\mapsto \pi_X(\s_t(0,n))h(x)=\int_G\pi_X(\s_t(0,n)\ast \nu_a)f(x)du$ is continuous in $t$ for 
 $\mu$-almost all $x\in X$. 
 Indeed if $\alpha(u)$ is a compactly supported bounded function on $G$, then the following standard inequality holds (using e.g. Jensen's inequality) : 
 $$\int_X \abs{\int_G \alpha(u)f(u^{-1}x)du}^2 d\mu(x) \le \int_X \int_G \abs{\alpha(u)}\abs{f(u^{-1}x)}^2 dud\mu(x)$$
 $$=\norm{\alpha}_{L^1(G)}\norm{f}^2_{L^2(X)} < \infty $$
 Choosing $\alpha$ as the characteristic function of a large fixed ball $B$ in $G$, it follows that the function $u\mapsto  \abs{f(u^{-1}x)}^2$ restricted to $B$ is an $L^1$ function on $B$, for almost every $x\in X$. Hence if $\alpha_r$ are uniformly bounded continuous functions supported in $B$ such that $r\mapsto \alpha_r$ is continuous in the $L^1$-norm on $B$, 
 then for almost every $x$ (by Cauchy-Schwartz inequality) 
 $$\abs{\int_B (\alpha_t(u)-\alpha_s(u))f(u^{-1}x)du}$$
 $$\le \left(\int_B\abs{\alpha_t(u)-\alpha_s(u)}du\right)\left( \int_B\abs{\alpha_t(u)-\alpha_s(u)}\abs{f(u^{-1}x)}^2du\right)$$
 so that $t\mapsto \int_G \alpha_t(u)f(u^{-1}x)$ is continuous in $t$ for almost every $x$. This holds of course for $\alpha_t$ defined as the density of the measure $ \s_t(0,n))\ast a$. 
 

Fix $f\in \cD(\cH_{\vare})$, and let $w$
 denote an arbitrary vector in $L^2$. The function  $y_w(t)=\inn{\pi(\s_t(0,n))f,w}$ is differentiable (in fact $C^{\infty}$) in $t$, and since $f$ is a differentiable vector, 
 by the fundamental theorem of calculus :
$$\int_s^t\inn{\frac{d}{du}\pi(\s_u(0,n))f,w}du=\int_s^t\frac{d}{du}\inn{\pi(\s_u(0,n))f,w}du$$
$$
=\int_s^t \frac{d}{du}y_w(u)du=y_w(t)-y_w(s)=\inn{\int_s^t\frac{d}{du}\pi(\s_u(0,n))f du,w}\,.$$
 Since the equality is valid
 for all $w\in L^2$, we obtain by Fubini's theorem,
 for $\mu$-almost all $x\in X$ : 
$$\int_s^t \left(\frac{d}{du}\pi(\s_u(0,n))f\right)(x)du= \pi(\s_t(0,n))f(x)-\pi(\s_s(0,n))f(x)$$
Hence, if $t,s \ge N$ :
$$\abs{\pi(\s_t(0,n))f(x)-\pi(\s_s(0,n))f(x)}\le \int_N^{\infty}\abs{\frac{d}{du}\pi(\s_u(0,n))f(x)}du$$
Therefore, using the continuity of $t\mapsto \pi(\s_t(0,n))f(x)$ for almost all $x$, for any $N \ge 0$ we have 
$$\limsup\limits_{t,s \to \infty}\abs{\pi(\s_t(0,n))f(x)-\pi(\s_s(0,n))f(x)}\le \int_N^{\infty}\abs{\frac{d}{du}\pi(\s_u(0,n))f(x)}du$$ 
It follows that the set 
$$ \set{x : \limsup\limits_{t,s\to \infty}\abs{\pi(\s_t(0,n))f(x)-\pi(\s_s(0,n))f(x)} \ge \delta }$$ is contained in the set
$ \{x : \int_N^{\infty} \abs{\frac{d}{du}\pi(\s_u(0,n))f(x)}du \ge \delta \}. $
The measure of the latter set is estimated by
integrating over $X$. We obtain, for any $\delta > 0$ and $N\ge 0$, the bound : 
$$
  \frac{1}{\delta} \int_X\int_N^{\infty} \abs{\frac{d}{du}\pi(\s_u(0,n))f(x)}du d\mu (x)= $$
$$=\frac{1}{\delta} \int_N^{\infty}\inn{\abs{\frac{d}{du}\pi(\s_u(0,n))f(x)},1} du \le
 \frac{1}{\delta} \int_N^{\infty}\norm{\frac{d}{du}\pi(\s_u(0,n))f}_2 du \le$$
$$\le \frac{1}{\delta} \int_N^{\infty}\frac{1}{\vare}(1+u)e^{-\vare u}du\norm{f}_2$$
$$
 \le \frac{1}{\delta} \frac{1}{\vare^3}(1+N) e^{-\vare N}\norm{f}_2
 \mathop{\to}\limits_{N\to\infty}0. $$
It follows that the set where $\pi(\s_t(0,n))f(x)$ does not converge is
  a null set. Consequently,
 for $f\in \cD(\cH_{\vare})$, $\lim\limits_{t \to \infty}\pi(\s_t(0,n))f(x)$ exists for $\mu$-almost all $x\in X$ and is equal to $\int_X f(x)d\mu=0$, by the mean
ergodic theorem for these operators (which is an immediate consequence of convergence in norm).
 Finally, we just have to note that indeed $\bigcup_{\vare >
 0}\cD(\cH_{\vare})$ is norm-dense in $\{f\in L^2(X)\mid \int_Xfd\mu =0\}$, by its construction.
 Thus $\sigma_t(0,n)f$ converge pointwise for $f$ in a norm-dense subspace. 
 
 As already noted above it follows that $\gamma^\eta_t(0,n)f$ converges pointwise for $f$ in the same dense subspace, and the proof of the pointwise 
ergodic theorem for $\gamma_t(0,n)$ in $L^2(X)$ now follows from the maximal inequality.  \qed

\end{document}